\newtheorem{theorem}{Theorem}[section]
\theoremstyle{definition}
\newtheorem{defi}[theorem]{Definition}
\theoremstyle{remark}
\newtheorem{remark}[theorem]{Remark}
\numberwithin{equation}{section}
\date{January 27 2009}
\newcommand{\RR}[1]{\mathbb{#1}}
\newcommand{\rd}{{\mathbb R^d}}
\newcommand{\rr}{{\mathbb R}}
\begin{document}

\title{\bf Stochastic solutions of Conformable fractional Cauchy problems}

\author{Y\" ucel \c Cenesiz}
\address{Y\" ucel \c Cenesiz, Department of Mathematics, Selcuk University, Konya, Turkey}
\email{}

\author{Ali Kurt}
\address{Ali Kurt, Department of Mathematics, Mustafa Kemal University,Hatay , Turkey}
\email{}
%\urladdr{http://www.stt.msu.edu/$\sim$mcubed/}
%\thanks{MMM was partially supported by NSF grant DMS-0417869.}

\author{Erkan Nane}
\address{Erkan Nane, Department of Mathematics and Statistics,
Auburn University, Auburn, AL 36849}
\email{ezn0001@auburn.edu}

\begin{abstract}
In this paper we  give stochastic solutions of conformable fractional Cauchy problems. The stochastic solutions are obtained by running the processes corresponding to Cauchy problems  with a nonlinear deterministic clock.
\end{abstract}

\keywords{ L\'{e}vy process, Cauchy problem, conformable time fractional   derivative}

%\textbf{Mathematics Subject Classification (2000):} 60J65, 60K99.
\maketitle
%\nocite{*}
\section{Introduction}
%\subsection{Conformable derivative}
Scientists have paid great attention to fractional calculus which is known as the differentiation and integration  of arbitrary order. In the last few decades, great amount of work  was carried out  on fractional calculus in different fields of engineering and science extensively
with variety of applications. While these studies have been cariied out, scientists used different definitions of fractional derivative and integral such as Gr\"unwald-Lethnikov, Reisz-Fischer, Caputo, Riemann-Liouville, modified Riemann-Liouville and etc. But almost all of these derivatives have some kind of flaws. For instance, the Riemann-Liouville fractional derivative of a constant is not zero, the Riemann-Liouville derivative and Caputo derivative do not obey the Leibnitz rule and chain rule. The Riemann-Liouville derivative and Caputo do not satisfy the known formula of the derivative of the quotient of two functions. The Caputo definition assumes that the function $f$ is smooth (at least absolutely continuous). Recently a new type of fractional derivative  called conformable fractional derivative that overcomes these flaws has been introduced by R. Khalil et al. \cite{rkhalil}.\\
\begin{defi} Let $f:\left[ 0,\infty \right) \rightarrow
%TCIMACRO{\U{211d} }%
%BeginExpansion
\mathbb{R}
%EndExpansion
$ be a function. The $\alpha ^{th}$ order \textit{"conformable fractional
derivative" \ }of $f$ is defined by,
\[
T_{\alpha }(f)(t)=\lim_{\varepsilon \rightarrow 0}\frac{f(t+\varepsilon
t^{1-\alpha })-f(t)}{\varepsilon },
\]

for all $t>0, \alpha \in (0,1).$ If $f$ is $\alpha $-differentiable in some $%
(0,a),a>0$ and $\lim\limits_{t\rightarrow 0^{+}}f^{(\alpha )}(t)$ exists
then define $f^{(\alpha )}(0)=\lim\limits_{t\rightarrow 0^{+}}f^{(\alpha
)}(t)$ and the \textit{"conformable fractional integral"} of a function $f$ starting from $a\geq0$ is defined as:
\[I_{\alpha }^{a}(f)(t)=\int\limits_{a}^{t}\frac{f(x)}{x^{1-\alpha }}dx\] where the integral is the usual Riemann improper integral, and ${\alpha }\in(0,1]$.
\end{defi}
The following theorem gives some properties which are satisfied by the conformable fractional derivative.
\begin{theorem}[\cite{rkhalil}]
Let $\alpha \in (0,1]$ and suppose $f,g$ are $\alpha $-differentiable at point $t>0$.
Then
\begin{enumerate}
\item $T_{\alpha }(cf+dg)=cT_{\alpha }(f)+cT_{\alpha }(g)$ for all $a,b\in
%TCIMACRO{\U{211d} }%
%BeginExpansion
\mathbb{R}
%EndExpansion
$.

\item $T_{\alpha }(t^{p})=pt^{p-\alpha }~$for all $p\in
%TCIMACRO{\U{211d} }%
%BeginExpansion
\mathbb{R}
%EndExpansion
.$

\item $T_{\alpha }(\lambda )=0$ for all constant functions $f(t)=\lambda .$

\item $T_{\alpha }(fg)=fT_{\alpha }(g)+gT_{\alpha }(f).$

\item $T_{\alpha }\left( \frac{f}{g}\right) =\frac{gT_{\alpha
}(f)-fT_{\alpha }(g)}{g^{2}}.$

\item In  addition, if   $f$  differentiable, then $T_{\alpha
}(f)(t)=t^{1-\alpha }\frac{df}{dt}.$
\end{enumerate}
\end{theorem}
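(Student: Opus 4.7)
The plan is to prove identity (6) first, which reduces the conformable derivative to a simple weighted form of the ordinary derivative; then parts (2) and (3) are immediate corollaries, and parts (1), (4), (5) can be obtained directly from the defining limit with almost the same manipulations used for the classical derivative.

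Step one is to establish $T_\alpha(f)(t) = t^{1-\alpha} f'(t)$ when $f$ is differentiable at $t>0$. I would do this by the substitution $h = \varepsilon\, t^{1-\alpha}$ inside the defining difference quotient; since $t>0$, $\varepsilon \to 0$ is equivalent to $h \to 0$, and the quotient factors as $t^{1-\alpha}$ times the standard difference quotient of $f$ at $t$. Parts (2) and (3) then drop out of $\frac{d}{dt}t^p = p\,t^{p-1}$ and $\frac{d}{dt}\lambda = 0$.

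Step two handles the algebraic rules (1), (4), (5) under the weaker hypothesis that $f,g$ are only $\alpha$-differentiable at $t$. Part (1) is immediate from linearity of the limit. For (4), I would mimic the classical product-rule proof by adding and subtracting $f(t)\,g(t + \varepsilon t^{1-\alpha})$ in the numerator and factoring, obtaining
\[
\frac{(fg)(t+\varepsilon t^{1-\alpha}) - (fg)(t)}{\varepsilon}
= \frac{f(t+\varepsilon t^{1-\alpha}) - f(t)}{\varepsilon}\, g(t+\varepsilon t^{1-\alpha})
 + f(t)\, \frac{g(t+\varepsilon t^{1-\alpha}) - g(t)}{\varepsilon},
\]
and pass to the limit. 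For (5), either apply the same add-and-subtract trick directly to the quotient difference, or first compute $T_\alpha(1/g)$ and combine with (4).

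The one preliminary step that needs care is the following small lemma required for (4) and (5): if $g$ is $\alpha$-differentiable at $t>0$ then $g$ is continuous at $t$. This is what guarantees $g(t+\varepsilon t^{1-\alpha}) \to g(t)$ in the product-rule limit above. The argument is the usual one: write
\[
g(t + \varepsilon t^{1-\alpha}) - g(t) = \varepsilon \cdot \frac{g(t + \varepsilon t^{1-\alpha}) - g(t)}{\varepsilon},
\]
and the right-hand side tends to $0 \cdot T_\alpha(g)(t) = 0$. With this lemma in hand, everything else is a mechanical translation of the classical derivative identities through the rescaling $h = \varepsilon\, t^{1-\alpha}$, so I do not expect any further obstacles.
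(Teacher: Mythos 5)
Your proposal is correct. Note that the paper itself contains no proof of this theorem: it is quoted directly from Khalil et al.\ \cite{rkhalil}, and your argument---first reducing to $T_{\alpha}(f)(t)=t^{1-\alpha}f'(t)$ via the substitution $h=\varepsilon t^{1-\alpha}$, deducing (2) and (3) from that, proving that $\alpha$-differentiability implies continuity, and then running the classical add-and-subtract manipulations for (1), (4), (5)---is essentially the standard proof given in that reference, so there is nothing to compare against within this paper and no gap to report (the only implicit hypothesis worth stating explicitly in (5) is $g(t)\neq 0$).
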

Because of its applicability, and  effectiveness  scientists studied conformable derivative in various fields. For instance; Abdeljawad \cite{abdel} has presented basic principles of calculus such as chain rule, integration by parts and etc.; Ghanbari and  Gholami \cite{gholami} used  the so called fractional conformable operators for establishing fractional Hamiltonian systems; Iyiola and Nwaeze \cite{iyi} proved some results on conformable fractional derivatives and fractional integrals, also they apply the d'Alambert approach to a conformable fractional differential equation; Anderson \cite{ander} employed the conformable derivative to formulate several boundary value problems with three or four conformable derivatives, including those with conjugate, right-focal, and Lidstone conditions; Zhao \textit{et al.} \cite{zao} investigate a new concept of conformable delta fractional derivative which has the identity operator on time scales; Pospisil and Skripkova \cite{mic} proved Sturm's separation and Sturm's comparison theorems for differential equations involving a conformable fractional derivative of order $0<\alpha \leq 1$ using local properties of conformable fractional derivative. Hence, it is  worthwile to work on this new area.

In this paper, we will study the stochastic solution of equations with conformable time derivative where the space operators may correspond to fractional Brownian motion, or  a Levy process, or a general semigroup in a Banach space, or a  process killed upon exiting a bounded domain in $\mathbb{R}^d$.

Self-similar processes arise naturally in limit theorems of random walks and other stochastic processes, and they have
been applied to model various phenomena in a wide range of scientific areas including physics, finance, telecommunications, turbulence,
image processing, hydrology and economics. The most important example of self-similar processes is fractional Brownian motion (fBm)
which is a stationary and  centered Gaussian process $B^H = {W H (t), t \geq 0}$ with $B^H (0) = 0$ and covariance function
\begin{equation}\label{cov-fBM}
E [B^ H (s)B^H (t)]= \frac12(|s|^2H + |t|^2H - |s - t|^2H).
\end{equation}
where $H \in (0, 1)$ is a constant. By using \eqref{cov-fBM} one can verify that $B^H$ is self-similar with index $H$ (i.e., for all constants $c > 0$,
the processes $\{B^H (ct), t \geq 0\}$ and $\{c^H B^ H (t), t\geq 0\}$ have the same finite-dimensional distributions) and has stationary
increments. When $H = 1/2$, $B^H$ is a  Brownian motion, which will be written as $B$.
This process was studied and popularized
in one-dimension by Mandelbrot \cite{mandel}.

One of the main results in this paper is the following theorem.
\begin{theorem}\label{fbm-ctfd}
Let $\alpha=2H\in (0,1)$, and  $B^H(t)$ be  Fractional Brownian motion  of index $H\in (0,1/2)$ started at 0. Then, the unique solution of  the equation with conformable time derivative
\begin{equation}
\begin{split}
T_{2H} u(t,x)&=H \Delta_x u(t,x), t>0, x\in \rd\\
u(0,x)&=f(x), \ x\in \rd
\end{split}
\end{equation}
is given by
\begin{equation}
u(t,x) = E[f(x+B^H(t))])
\end{equation}
\end{theorem}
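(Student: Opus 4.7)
The plan is to reduce the conformable fractional Cauchy problem to the classical heat equation by a deterministic time change, leveraging the self-similarity and Gaussianity of fractional Brownian motion. First, I would invoke property (6) of Theorem 1.2: for any candidate solution $u$ that is $C^1$ in $t$ on $(0,\infty)$, we have $T_{2H}u(t,x)=t^{1-2H}\partial_t u(t,x)$, so the conformable equation collapses to the ordinary evolution equation $t^{1-2H}\partial_t u = H\Delta_x u$.

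Next, I would exploit self-similarity of $B^H$: the identity $B^H(t)\stackrel{d}{=}t^H Z$ with $Z\sim N(0,I_d)$ (interpreting $d$-dimensional fBm componentwise) gives
\begin{equation*}
u(t,x)=E[f(x+B^H(t))]=E[f(x+t^H Z)]=v(t^{2H},x),
\end{equation*}
where $v(s,x):=E[f(x+\sqrt{s}\,Z)]$ is the classical Gaussian heat semigroup applied to $f$. For $f$ bounded continuous (the regularity class I expect to impose), $v$ is smooth on $(0,\infty)\times\rd$, satisfies $v(s,x)\to f(x)$ as $s\downarrow 0$, and solves $\partial_s v=\tfrac{1}{2}\Delta_x v$. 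The chain rule then gives $\partial_t u(t,x)=2H\,t^{2H-1}\partial_s v(t^{2H},x)$, and substituting back yields
\begin{equation*}
T_{2H}u(t,x)=t^{1-2H}\cdot 2H\,t^{2H-1}\,\partial_s v(t^{2H},x)=2H\cdot\tfrac{1}{2}\Delta_x u(t,x)=H\Delta_x u(t,x),
\end{equation*}
while $u(0,x)=v(0,x)=f(x)$ disposes of the initial condition.

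For uniqueness, my plan is to observe that the time change $s=t^{2H}$ is a smooth bijection of $(0,\infty)$ onto itself, setting up a one-to-one correspondence between classical solutions $u$ of the conformable PDE and classical solutions $v$ of the heat equation with the same initial data $f$, via $u(t,x)=v(t^{2H},x)$. Uniqueness of bounded solutions of the heat equation (for instance in the Tychonoff class) then transfers directly to uniqueness for the conformable problem.

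The main obstacle I anticipate is not the PDE manipulation—which is pure chain rule combined with self-similarity—but specifying the function class for $f$ and for admissible solutions precisely enough that property (6) of Theorem 1.2 applies on all of $(0,\infty)$, that the self-similarity representation is justified, and that uniqueness lifts cleanly through the time change without introducing spurious solutions at $t=0$ where $t^{1-2H}$ vanishes.
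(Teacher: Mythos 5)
Your proposal is correct and rests on the same key observation as the paper: the marginal law of $B^H(t)$ is Gaussian with variance $t^{2H}$, so $u$ is the heat semigroup run with the nonlinear clock $s=t^{2H}$, and the chain rule turns $T_{2H}$ into $H\Delta_x$ --- the paper performs exactly this computation on the kernel $p(t,x,y)=(2\pi t^{2H})^{-d/2}\exp(-|x-y|^2/2t^{2H})$ and then interchanges $T_{2H}$ with the integral by dominated convergence, which is your self-similarity argument in kernel form. The only difference is that you also spell out the uniqueness step via the bijective time change to the classical heat equation, which the paper's proof of this theorem leaves implicit.
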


\begin{remark}O'Malley \textit{et al.} \cite{malley} studied a multiple dimensional
extension of fBm. O'Malley and Cushman \cite{malley2} developed a nonlinear extension of classical fBm,
which is fBm run with a nonlinear clock.
In the last theorem and the other results in this paper  we observe that replacing the first time derivative with a conformable time fractional  time derivative corresponds to replacing the time in the stochastic solution of  a Cauchy problem  with  a nonlinear clock. Please see Meerschaert and Sheffler \cite{limitCTRW} for the corresponding time change when we work with a time fractional  Cauchy problem with a Caputo fractional derivative.

\end{remark}

Our paper is organized as follows. In the next section we give  some preliminaries about semigroups on a Banach space and L\'evy semigroups. In section 3, we state main theorems and give the proof of Theorem \ref{fbm-ctfd}.

\section{Preliminaries on L\'evy processes}

 Given a Banach space and a
bounded continuous semigroup $P(t)$ on that space with generator
$L_x$, it is well known that $u(t,x)=P(t)f(x)$ is the unique
solution to the abstract Cauchy problem
\begin{equation}\label{ACP}
\frac{\partial}{\partial t}u(t,x) =L_x u(t,x); \quad u(0,x) = f(x)
\end{equation}
for any $f$ in the domain of $L_x$; see, for example,
\cite{ABHN,pazy}.

In the case where $X_0(t)$ is a L\'evy process started at zero and
$X(t)=x+X_0(t)$ for $x\in\rd$, the generator $L_x$ of the
semigroup $P(t)f(x)=E_x[f(X(t))]$ is a pseudo-differential
operator \cite{applebaum,Jacob,schilling} that can be explicitly
computed by inverting the L\'evy representation. The L\'{e}vy
process $X_0(t)$ has characteristic function
$$E[\exp(ik\cdot
X_0(t))]=\exp(t\psi(k))$$ with
$$
\psi (k)=ik\cdot a-\frac{1}{2}k\cdot Qk+ \int_{y\neq 0}\left(
e^{ik\cdot y}-1-\frac{ik\cdot y}{1+||y||^{2}}\right)\phi(dy),
$$
where $a\in \RR{R}^{d}$, $Q$ is a nonnegative definite matrix, and
$\phi$ is a $\sigma$-finite Borel measure on $\RR{R}^{d}$ such
that
$$
\int_{y\neq 0}\min \{1,||y||^{2}\}\phi(dy)<\infty;
$$
see, for example, \cite[Theorem 3.1.11]{RVbook} and \cite[Theorem
1.2.14]{applebaum}. Let
$$\hat{f}(k)=\int_{\RR{R}^{d}}e^{-ik\cdot
x}f( x)\,dx$$
denote the Fourier transform. Theorem 3.1 in
\cite{fracCauchy} shows that $L_x f(x)$ is the inverse Fourier
transform of $\psi(k)\hat{f}(k)$ for all $f\in D(L_x)$, where $D(L_x)$ is the domain of $L_x$ defined by
$$
D(L_x)=\{ f\in L^{1}(\RR{R}^{d}):\ \psi(k)\hat{f}(k)=\hat{h}(k)\
\exists \ h \in L^{1}(\RR{R}^{d})  \},
$$
and
\begin{equation}\begin{split}\label{pseudoDO}
L_x f(x)&= a\cdot\nabla f(x) +\frac{1}{2}\nabla \cdot Q\nabla
f(x)\\
&+ \int_{y \neq  0} \left(  f(x+y)-f(x)-\frac{\nabla
f(x)\cdot y}{1+y^{2}} \right)\phi(dy)
\end{split}\end{equation}
for all $f\in W^{2,1}(\RR{R}^{d})$, the Sobolev space of
$L^{1}$-functions whose first and  second partial derivatives are
all $L^{1}$-functions.  We can also view the genrator of a L\'evy process as
$L_x=\psi(-i\nabla)$ where $\nabla=(\partial/\partial
x_1,\ldots,\partial/\partial x_d)'$.  Some  examples are as follows.  When $\psi(k)=-D\|k\|^\alpha$ and
$L_x=-D(-\Delta)^{\alpha/2}$, a fractional derivative in space,
using the correspondence $k_{j}\to -i\partial/\partial x_j$ for
$1\leq j\leq d$ the corresponding process  $X_0(t)$ is
spherically symmetric stable. When $\psi(k)=D\sum_j (ik_j)^{\alpha_j}$ and
$L_x=D\sum_j \partial^{\alpha_j}/\partial x_j^\alpha$ using
Riemann-Liouville fractional derivatives in each variable, the corresponding process  $X_0$ has independent stable marginals. This
form is  different from  fractional Laplacian unless all
$\alpha_j=2$.

\section{ Main results}

Our first main result is the following theorem.

\begin{theorem}\label{general-thm-1}
Let $\alpha\in (0,1)$, and  $L_{x}$ be the generator of a continuous (Markov) semigroup
$P(t)f(x)(=E_{x}[f(X_{t})])$, and take $f\in D(L_{x})$, the domain of
the generator.  Then, the unique solution of  the equation with conformable time derivative
\begin{equation}
\begin{split}
T_\alpha u(t,x)&=L_x u(t,x), t>0\\
u(0,x)&=f(x),
\end{split}
\end{equation}
is given by
\begin{equation}
u(t,x) =P(t^\alpha/\alpha)f(x)(= E_{x}[f(X_{t^\alpha/\alpha})])
\end{equation}
\end{theorem}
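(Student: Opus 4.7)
The strategy is to reduce the conformable Cauchy problem to the classical abstract Cauchy problem \eqref{ACP} via the deterministic time change $s=t^{\alpha}/\alpha$, using item (6) of the theorem following Definition 1.1, which says that whenever $u(\cdot,x)$ is differentiable in $t$ we have $T_\alpha u(t,x)=t^{1-\alpha}\partial_t u(t,x)$.

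First I would verify existence. Set $\tau(t)=t^{\alpha}/\alpha$, so that $\tau'(t)=t^{\alpha-1}$ for $t>0$. Since $f\in D(L_x)$, semigroup theory (see \cite{ABHN,pazy}) gives $P(\tau(t))f\in D(L_x)$ for every $t\ge 0$, together with the classical differentiation formula $\frac{d}{ds}P(s)f=L_x P(s)f$. Defining $u(t,x)=P(\tau(t))f(x)$ and applying the chain rule for ordinary derivatives,
\begin{equation*}
\frac{\partial}{\partial t}u(t,x)=\tau'(t)\,L_x P(\tau(t))f(x)=t^{\alpha-1}L_x u(t,x).
\end{equation*}
Multiplying by $t^{1-\alpha}$ and invoking property (6) yields $T_\alpha u(t,x)=L_x u(t,x)$. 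The initial condition is immediate: $u(0,x)=P(0)f(x)=f(x)$.

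For uniqueness, suppose $v(t,x)$ is any solution of the conformable Cauchy problem with $v(0,x)=f(x)$ and $v(t,\cdot)\in D(L_x)$ for $t>0$. Introduce the time-changed function $w(s,x):=v\!\bigl((\alpha s)^{1/\alpha},x\bigr)$, i.e.\ $w(\tau(t),x)=v(t,x)$. Using $\frac{dt}{ds}=t^{1-\alpha}$ and property (6) again,
\begin{equation*}
\frac{\partial}{\partial s}w(s,x)=t^{1-\alpha}\,\frac{\partial}{\partial t}v(t,x)=T_\alpha v(t,x)=L_x v(t,x)=L_x w(s,x),
\end{equation*}
with $w(0,x)=f(x)$. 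Thus $w$ solves the classical abstract Cauchy problem \eqref{ACP}, which on the domain $D(L_x)$ has the unique solution $w(s,x)=P(s)f(x)$. Reverting the substitution gives $v(t,x)=P(t^{\alpha}/\alpha)f(x)=u(t,x)$.

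The one step that requires a little care, rather than routine computation, is justifying that $u(t,\cdot)$ actually lies in $D(L_x)$ and is classically $t$-differentiable at $t=0^{+}$, so that property (6) applies and the initial value is taken in the required sense; since $\tau(t)\to 0$ continuously as $t\to 0^{+}$ and $P$ is strongly continuous with $P(0)=I$, this follows from the standard strong continuity of $P(t)f$ at $0$ for $f\in D(L_x)$, but it is the one place where the regularity hypothesis $f\in D(L_x)$ is genuinely used.
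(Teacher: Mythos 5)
Your proposal is correct and follows essentially the same route as the paper: the existence part is the identical chain-rule computation combined with property (6), and your uniqueness argument via the inverse time change $w(s,x)=v((\alpha s)^{1/\alpha},x)$ is just a careful spelling-out of the paper's one-line remark that uniqueness for the conformable problem reduces to uniqueness for the abstract Cauchy problem \eqref{ACP}. Your closing observation about needing $P(\tau(t))f\in D(L_x)$ and regularity at $t=0^{+}$ is a useful point of care that the paper glosses over, but it does not change the argument.
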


\begin{proof}

The proof follows by a chain rule when the derivative of the semigroup, $t\to P(t)$ exists in the usual sense.
Let $u(t,x)=P(t^\alpha/\alpha)f(x)$. Then by chain rule
\begin{equation}\begin{split}
\frac{\partial}{\partial t}u(t,x)&=P^{'}(t^\alpha/\alpha)f(x) \frac{d t^\alpha/\alpha}{dt}\\
&=t^{\alpha-1}P^{'}(t^\alpha/\alpha)f(x)\\
&=t^{\alpha-1} L_x P(t^\alpha/\alpha)f(x)\\
&=t^{\alpha-1} L_x u(t,x).
\end{split}
\end{equation}

Hence  we have
$$
T_\alpha u(t,x)=t^{1-\alpha}\frac{\partial}{\partial t}u(t,x)=L_x u(t,x).
$$

Uniqueness holds since the solution of the  the equation \eqref{ACP} is unique and the solution of the conformable Cauchy problem is given in terms of the  solution of equation \eqref{ACP}.

\end{proof}

\begin{proof}[Proof of Theorem \ref{fbm-ctfd}]
This follows from the fact that the density of fractional Brownian motion, $p(t,x,y)=\frac{1}{(2\pi t^{2H})^{d/2}}\exp(-|x-y|^2/2t^{2H})$ satisfy
$$
\frac{\partial}{\partial t}p(t,x,y)=Ht^{2H-1}\Delta_x p(t,x,y)
$$
Hence we get
$$
T_{2H}p(t,x,y)=H\Delta_x p(t,x,y)
$$
Next we can apply the conformable fractional derivative and use the dominated convergence theorem  to the function 
$$u(t,x)=E[f(x+B^H(t))])=\int_\rd p(t,x,y)f(y)dy$$ to get
\begin{equation}\begin{split}
T_{2H}u(t,x)&=\int_\rd  T_{2H}p(t,x,y)f(y)dy\\
&= \int_\rd  H\Delta_x p(t,x,y)f(y)dy\\
&=H\Delta_x\int_\rd p(t,x,y)f(y)dy\\
&=H\Delta_x u(t,x).
\end{split}\end{equation}
\end{proof}

The next result is a restatement of Theorem \ref{general-thm-1} for
L\'evy semigroups.  {\bf The proof does not use the differentiability of} the semigroup $T(t)$, rather it relies on a Fourier
transform argument.  Recall the following notation for the
Fourier transform:
\begin{equation*}\begin{split}
%\tilde{u}(s,x)&=\int_{0}^{\infty}e^{-st}u(t, x)dt ; \\
\hat{u}(t,k)&=\int_{\RR{R}^{d}}e^{-ik\cdot x}u(t, x)dx ;
%\bar{u}(s,k)&=\int_{\RR{R}^{d}}e^{ik\cdot x}\int_{0}^{\infty}e^{-st}
%u(t, x)dtdx .
\end{split}\end{equation*}

\begin{theorem}\label{unified2}
Suppose that $X(t)=x+X_0(t)$ where $X_0(t)$ is a L\'evy process
starting at zero.  If $L_x$ is the generator \eqref{pseudoDO} of the
semigroup $P(t)f(x)=E_x[(f(X_t))]$ on $L^1(\rd)$, then, for any $f\in
D(L_{x})$, the unique solution of the following Cauchy problem
\begin{equation}\label{levy-type}
\begin{split}
T_\alpha u(t,x)&=L_x u(t,x), t>0\\
u(0,x)&=f(x),
\end{split}
\end{equation}
is given by
\begin{equation}
u(t,x) =P(t^\alpha/\alpha)f(x)(= E_{x}[f(X_{t^\alpha/\alpha})])
\end{equation}
\end{theorem}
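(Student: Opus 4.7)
The plan is to mimic the standard Fourier argument for Lévy-generator Cauchy problems, but on the time side replace the classical first derivative with the conformable derivative, which by property (6) of the initial theorem is just $t^{1-\alpha}\partial_t$ once smoothness in $t$ is available. First I would take the spatial Fourier transform of \eqref{levy-type}. Since $L_x$ has Fourier symbol $\psi(k)$ (by the discussion following the definition of $D(L_x)$ in the preliminaries), the equation becomes, for each fixed $k$,
\begin{equation*}
T_\alpha \hat u(t,k) = \psi(k)\,\hat u(t,k), \qquad \hat u(0,k)=\hat f(k).
\end{equation*}

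Next I would solve this ODE in $t$ for each fixed $k$. Writing $T_\alpha \hat u = t^{1-\alpha}\partial_t \hat u$ and separating variables gives
\begin{equation*}
\hat u(t,k) = \hat f(k)\,\exp\!\left(\psi(k)\,\frac{t^\alpha}{\alpha}\right).
\end{equation*}
Recognizing $\exp(s\psi(k)) = E[\exp(ik\cdot X_0(s))]$ with $s=t^\alpha/\alpha$, this is exactly the Fourier transform of $x\mapsto E[f(x+X_0(t^\alpha/\alpha))]=P(t^\alpha/\alpha)f(x)$. Inverting the Fourier transform then identifies $u(t,x)=P(t^\alpha/\alpha)f(x)$ as a solution. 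For uniqueness, any $L^1$ solution must satisfy the same pointwise (in $k$) first-order ODE in $t$, which has a unique solution given the initial value $\hat f(k)$; hence $\hat u$, and therefore $u$, is uniquely determined.

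The main obstacle will be justifying the use of the conformable derivative under the Fourier integral. Concretely, I need to show that $\partial_t \hat u(t,k)$ exists and coincides with the Fourier transform of $\partial_t u(t,x)$, and similarly that $\psi(k)\hat u(t,k)$ is the Fourier transform of $L_x u(t,x)$ for $f\in D(L_x)$. The latter is precisely the content invoked from \cite{fracCauchy} in the preliminaries. For the former, I would observe that the candidate solution $u(t,x)=E_x[f(X_{t^\alpha/\alpha})]$ is simply $P(s)f(x)$ composed with the deterministic time change $s=t^\alpha/\alpha$, so differentiability in $t$ (for $t>0$) reduces to the strong continuity and differentiability of $s\mapsto P(s)f$ at $s=t^\alpha/\alpha>0$, valid for $f\in D(L_x)$.

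Finally, to close the loop I would check the initial condition: as $t\downarrow 0$, $t^\alpha/\alpha\to 0$, and by strong continuity of $P(s)$, $P(t^\alpha/\alpha)f\to f$ in $L^1$. Putting these pieces together gives the stated representation, and the entire argument avoided any appeal to differentiability of $P(t)$ in the strong operator sense beyond what is already guaranteed for $f\in D(L_x)$, in keeping with the paper's remark that the proof rests on a Fourier transform argument.
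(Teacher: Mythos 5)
Your proposal follows essentially the same route as the paper's own proof: take the spatial Fourier transform, use that $\psi(k)\hat f(k)$ is the Fourier transform of $L_xf$, solve the resulting conformable ODE $T_\alpha \hat u = \psi(k)\hat u$ to get $\hat u(t,k)=\hat f(k)e^{\psi(k)t^\alpha/\alpha}$, recognize this as $\hat f$ times the characteristic function of $X_{t^\alpha/\alpha}$, and invert. The extra care you take in justifying the interchange of $T_\alpha$ with the Fourier integral and in checking the initial condition is a welcome refinement but does not change the argument.
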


\begin{proof}
Take Fourier transforms on both sides of \eqref{levy-type} to get
\begin{equation}\label{mmm1}
T_\alpha \hat{u}(t,k)= \psi (k)\hat{u}(t,k)
\end{equation}
using the fact that $\psi(k)\hat{f}(k)$ is the Fourier transform  of
$L_x f(x)$.
Then since the eigenfunctions of the conformable derivative operator
$$
T_\alpha g(t)=\lambda g(t); f(0)=c
$$
are  given by $g(t)=ce^{\frac{\lambda}{\alpha}t^{\alpha}}$, the solution of equation \eqref{mmm1} are given by
$$
\hat{u}(t,k)=ce^{\frac{\psi(k)}{\alpha}t^{\alpha}}= \hat{f}(k)e^{\frac{\psi(k)}{\alpha}t^{\alpha}}=\hat{f}(\mathrm{Characteristic\ \  function\ \  of}\ \ X_{t^\alpha/\alpha})
$$
since the initial function is $f(x)$ we have $\hat{u}(0,k)=\hat{f}(k)$, and  the Fourier transform (characteristic function) of the L\'evy process $X_t$ is $e^{t\psi(k)}$, we have by taking inverse Fourier transforms (this is given by the convolution of $f$ and the density of $X_{t^\alpha/\alpha}$)
$$
u(t,x)=\int_{\rd}p(t^\alpha/\alpha,x-y)f(y)dy=E_x[f(X_{t^\alpha/\alpha})]
$$

\end{proof}

Let $D\subset \rd$ be a bounded domain with a nice (smooth) boundary $\partial D$.
Let $p_D(t,x,y)$ denote the heat kernel of the equation
\begin{equation}\label{frac-laplacian-bounded-domain}\begin{split}
\frac{\partial}{\partial t}u(t,x)&=L^D_xu(t,x),\ \  x\in D, \ \ t>0\\
u(0,x)&=f(x),\ \ x\in D\\
u(t,x)&=0,\ \ x\in \partial D,\ \mathrm{when \ } \beta=2\\
u(t,x)&=0,\ \ x\in D^C,\ \mathrm{when \ } 0<\beta<2
\end{split}\end{equation}
here $L^D_x$ is the generator of the   stable L\'evy process of index $\beta\in (0, 2]$ killed at the first exit time, $\tau_D$, from the domain $D$. When $\beta=2$, $X_t$ is Brownian motion and $L_x^D=\Delta_x|_D$ is the Laplacian restricted to the bounded domain $D$. When $\beta\in (0,2)$, then $X_t$ is a stable L\' evy process and $L_x^D=-(\Delta)^{\beta/2}|_D$ is the fractional Laplacian restricted to the bounded domain $D$

There exist eigenvalues
$0< \lambda_1<\lambda_2\leq \lambda_3\cdots,$ such that $\lambda_n\to\infty,$ as
$n\to\infty$, with the  corresponding complete orthonormal set (in $H^2_0$ ) of
eigenfunctions $\phi_n$ of the operator $L_D$ satisfying
%(i.e.,
%$\int_D\psi_n(x)\psi_m(x)dx=\delta_{mn}$ where $\delta_{mn}$ is
%the Kronecker delta function ):

\begin{equation}\label{eigen-eigen}
 L^D_x \phi_n(x)=-\lambda_n \phi_n(x), \ x\in D;\  \\
\phi_n |_{\partial D}=0 \ \mathrm{when \ } \beta=2;\\
\phi_n |_{ D^C}=0 \ \mathrm{when \ } 0<\beta<2.
\end{equation}

 A well known fact is that
\begin{equation}\label{killed-kernel}
p_D(t,x,y)=\sum_{n=1}^\infty e^{-\lambda_n t}\phi_n(x)\phi_n(y)\ \ \mathrm{for \ all}\  x, y\ \in D, t>0.
\end{equation}
The series
converges absolutely and  uniformly on $[t_0,\infty)\times D\times D$ for all
$t_0>0$.

The unique solution of equation \eqref{frac-laplacian-bounded-domain} is given by

$$u(t,x)=\int_D p_D(t,x,y)f(y)dy=E_x[f(X_t)I(\tau_D>t)]=\sum_{n=1}^\infty\phi_n(x)e^{-\lambda_nt}\bigg[\int_D f(y)\phi_n(y)dy\bigg],$$ where here $X_t$ is a L\' evy process and $\tau_D=\inf\{s>0: X(s)\notin D\}$  is the first exit time of the process from $D$. See, for example, \cite{ cmn-12, mnv-09} for more on the killed stable process and its generator and semigroup.

Now we can state our theorem for the conformable Cauchy problem in bounded domains.
\begin{theorem}\label{unified2}
Suppose that $X(t)=x+X_0(t)$ where $X_0(t)$ is a stable L\'evy process of index $\beta\in (0, 2]$
starting at zero.  If $L^D_x$ is the generator \eqref{pseudoDO} of the
semigroup $P^D(t)f(x)=E_x[f(X_t)I(\tau_D>t)]$ on $H^2_0(D)$, then, for any $f\in
D(L^D_{x})$, the unique solution of the  following initial-boundary value problem
\begin{equation}\label{ bounded domain}
\begin{split}
T_\alpha u(t,x)&=L^D_x u(t,x), \ \ t>0,\ \ x\in D\\
u(0,x)&=f(x)\\
u(t,x)&=0,\ \ x\in \partial D,\ \mathrm{when \ } \beta=2\\
u(t,x)&=0,\ \ x\in D^C,\ \mathrm{when \ } 0<\beta<2
\end{split}
\end{equation}
is given by
\begin{equation}
\begin{split}
u(t,x)& =T^D(t^\alpha/\alpha)f(x)(= E_{x}[f(X_{t^\alpha/\alpha})I(\tau_D>t^\alpha/\alpha)])\\
&=\sum_{n=1}^\infty\phi_n(x)e^{-\lambda_nt^\alpha/\alpha}\bigg[\int_D f(y)\phi_n(y)dy\bigg]
\end{split}\end{equation}
\end{theorem}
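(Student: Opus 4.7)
My plan is to combine the chain-rule argument from the proof of Theorem \ref{general-thm-1} with the spectral representation \eqref{killed-kernel} of the killed heat kernel. The key observation is that $T_\alpha u(t,x) = t^{1-\alpha}\partial_t u(t,x)$ for smooth $u$, so the conformable Cauchy problem on $D$ becomes the classical Cauchy problem under the time change $s = t^\alpha/\alpha$; this means both strategies used earlier in the paper (direct chain rule and eigenfunction expansion) apply here, and I would use the chain rule to prove the equation holds and the expansion to produce the explicit series form.

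First I would set
\begin{equation*}
u(t,x) = P^D(t^\alpha/\alpha)f(x) = \int_D p_D(t^\alpha/\alpha, x, y)\,f(y)\,dy,
\end{equation*}
which automatically satisfies $u(0,x) = f(x)$ and inherits the Dirichlet (or exterior) condition from $p_D$. Since $f\in D(L_x^D)$, we have $\partial_s P^D(s)f = L_x^D P^D(s)f$, so the ordinary chain rule gives
\begin{equation*}
\partial_t u(t,x) = t^{\alpha-1}\,L_x^D P^D(t^\alpha/\alpha)f(x) = t^{\alpha-1}\,L_x^D u(t,x),
\end{equation*}
from which $T_\alpha u = L_x^D u$ follows exactly as in the proof of Theorem \ref{general-thm-1}.

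To obtain the explicit series form I would substitute \eqref{killed-kernel} into the integral representation of $u$ and interchange sum and integral, writing $c_n = \int_D f(y)\phi_n(y)\,dy$:
\begin{equation*}
u(t,x) = \sum_{n=1}^\infty c_n\,\phi_n(x)\,e^{-\lambda_n t^\alpha/\alpha}.
\end{equation*}
Equivalently, one can check this series solves the equation term by term using $L_x^D\phi_n = -\lambda_n\phi_n$ from \eqref{eigen-eigen} together with the scalar identity $T_\alpha[e^{-\lambda_n t^\alpha/\alpha}] = -\lambda_n e^{-\lambda_n t^\alpha/\alpha}$. Uniqueness is inherited from \eqref{frac-laplacian-bounded-domain}: any solution of the conformable problem, viewed in the variable $s = t^\alpha/\alpha$, solves the classical problem, whose solution in $D(L_x^D)$ is unique.

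The hard part will be justifying the term-by-term action of $T_\alpha$ and $L_x^D$ on the series, together with the interchange of integral and sum above. I would rely on the absolute and uniform convergence of \eqref{killed-kernel} on $[s_0,\infty)\times D\times D$ for every $s_0>0$; since $t\mapsto t^\alpha/\alpha$ is a smooth diffeomorphism of $(0,\infty)$, this transfers to uniform convergence of $u(t,x)$ and of $\partial_t u(t,x)$ on sets of the form $[t_1,\infty)\times D$ for every $t_1>0$. Weyl-type growth of the eigenvalues $\lambda_n$ makes $\sum_n e^{-\lambda_n s_0}$ and the relevant polynomial moments finite, which controls all tail estimates. This is the same mechanism used in \cite{mnv-09} for time-fractional Cauchy problems on bounded domains, and it transfers directly to the conformable setting through the smooth time change $s = t^\alpha/\alpha$.
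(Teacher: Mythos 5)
Your argument is correct and follows essentially the route the paper intends: the paper's own proof is a one-line deferral to separation of variables as in \cite{cenesiz-Kurt} and \cite{cmn-12}, which is exactly your eigenfunction-expansion step built on $L^D_x\phi_n=-\lambda_n\phi_n$ and the scalar identity $T_\alpha[e^{-\lambda_n t^\alpha/\alpha}]=-\lambda_n e^{-\lambda_n t^\alpha/\alpha}$. Your additional chain-rule verification through the killed semigroup and your discussion of uniform convergence simply supply the details the paper explicitly leaves to the reader.
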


\begin{proof}
The proof follows by a separation of variables as in \cite[section 2]{cenesiz-Kurt} and as in the proof of Theorem 5.1 in \cite{cmn-12}. We leave the details to the reader.
\end{proof}

When $D=[0,L]\subset \rr$ and $\beta=2$, then $\lambda_n=(n\pi/L)^2$, $\phi_n(x)=\sin (n\pi x/L)$. In this case we recover the result in \c Cenesiz and Kurt \cite[section 2]{cenesiz-Kurt}.

%\newpage

\end{document}